\newtheorem{thm}{Theorem}[section]
\newtheorem{lem}[thm]{Lemma}
\newtheorem{con}[thm]{Conjecture}
\theoremstyle{definition}
\newtheorem{defn}[thm]{Definition}
\theoremstyle{remark}
\newtheorem*{rem}{Remark}
\newcommand{\divides}{\,\Big{|}\,}
\newcommand{\inlinemod}[2]{\equiv #1 \,\,(\!\!\!\!\mod #2)}
\title{A Proposed Crank for $(k+j)-$colored partitions, with $j$ colors having distinct parts}
\author{Sam Wilson}
\date{}
\begin{document}

\begin{abstract}
    In 1988, George Andrews and Frank Garvan discovered a crank for $p(n)$. In 2020, Larry Rolen, Zack Tripp, and Ian Wagner generalized the crank for $p(n)$ in order to accommodate Ramanujan-like congruences for $k$-colored partitions. In this paper, we utilize the techniques used by Rolen, Tripp, and Wagner for crank generating functions in order to define a crank generating function for $(k+j)$-colored partitions where $j$ colors have distinct parts. We provide three infinite families of crank generating functions and conjecture a general crank generating function for such partitions.
\end{abstract}

% Keywords: Crank, Integer Partitions, Colored Partitions
% MSC Codes: 05A15, 05A17, 11F33

\maketitle

\section{Introduction}
In 1944 Freeman Dyson defined the \textit{rank} of a partition in order to provide a combinatorial explanation for the famous partition congruences discovered by Ramanujan, 
\begin{align*}
    p(5n+4) &\equiv 0 \mod{5} \\
    p(7n+5) &\equiv 0 \mod{7} \\
    p(11n+6) &\equiv 0 \mod{11} .
\end{align*} 

To calculate the rank, one subtracts the largest part of the partition and its number of parts. The purpose of the rank is to distribute the partitions of a number $\ell n+\delta$ into $\ell$-equinumerous sets. For example, if $n=1$, $\ell=5$, and $\delta=4$, we expect the number of partitions of $9$ to be divisible by $5$. Indeed, Table \ref{table:rank} shows how the rank divides up the partitions of $9$ into $5$ equally sized sets. One may check that the rank always gives such a distribution for $p(5n+4)$ and $p(7n+5)$, however the rank fails to be sufficient for $p(11n+6)$.
In 1944 Dyson conjectured the existence of a generalized rank, which he called the \textit{crank}, that works for all three of Ramanujan's partition congruences.
\begin{table}[h!]
\centering
\begin{tabular}{||c | c | c | c | c ||} 
 \hline
 rank $\inlinemod{0}{5}$ & rank $\inlinemod{1}{5}$  & rank $\inlinemod{2}{5}$  & rank $\inlinemod{3}{5}$ & rank $\inlinemod{4}{5}$  \\ [0.5ex] 
 \hline\hline
 \{2,2,1,1,1,1,1\} & \{2,2,2,1,1,1\} & \{1,1,1,1,1,1,1,1,1\} & \{3,2,2,1,1\} & \{2,1,1,1,1,1,1,1\} \\ 
 \{3,3,3\} & \{3,1,1,1,1,1,1\} & \{2,2,2,2,1\} & \{4,1,1,1,1,1\} & \{3,2,2,\} \\
 \{4,2,2,1\} & \{4,3,2\} & \{3,2,1,1,1,1\} & \{3,3,1,1,1\} & \{3,3,2,1\} \\
 \{4,3,1,1\} & \{4,4,1\} & \{5,2,2\} & \{5,4\} & \{4,2,1,1,1\} \\
 \{5,1,1,1,1\} & \{5,2,1,1\} & \{5,3,1\} & \{6,2,1\} &  \{6,3\}\\ 
 \{7,2\}& \{8,1\} & \{6,1,1,1\}& \{9\} & \{7,1,1\}\\
 \hline
\end{tabular}
\caption{The partitions of $9$ sorted by rank.}
\label{table:rank}
\end{table}
Such a crank was defined by George Andrews and Frank Garvan in 1988 \cite{Andrews1988DysonsCO}. The definition for their crank is given below. 

\begin{defn}\label{defn:rank}
 Let $\lambda$ be an integer partition, $l(\lambda)$ be the largest part, $\omega(\lambda)$ be the number of $1$s in $\lambda$, and $\mu(\lambda)$ be the number of parts in $\lambda$ which are larger than $\omega(\lambda)$. The crank of $\lambda$ is defined to be
 $$
 \begin{cases}
     l(\lambda) & \text{ if } \omega(\lambda) = 0 \\
     \mu(\lambda) - \omega(\lambda) & \text{ if } \omega(\lambda) > 0.
 \end{cases}
 $$
\end{defn}

\begin{rem}
    It should be noted that, for $\ell=5$ or $\ell=7$, although both the rank and the crank (mod $\ell$) distribute the partitions of $n$ into $\ell$ equally sized sets, these sets are not necessarily equal. For example, the crank of the partition $\{ 5, 2, 2 \}$ is congruent to $0$ mod $5$ while the rank of this partition is congruent to $2$ mod $5$.
\end{rem}
Andrews and Garvan also provided the generating function for the crank. Namely, if $M(m,n)$ denotes the number of partitions of $n$ with crank $m$, then the generating function is given by
$$
C(z,\tau) := \sum_{m,n=0}^\infty M(m,n)\zeta^mq^n = \prod_{n=1}^\infty \frac{1-q^n}{(1-\zeta q^n)(1-\zeta^{-1}q^n)},
$$
where $M(-1,1) = M(0,1) = M(1,1) = 1$,  $q = e^{2\pi i \tau}$, and $\zeta=e^{2\pi i z}$. Note that by setting $\zeta = 1$, we obtain the generating function for $p(n)$. Naturally, one may ask if the equidistribution of the crank (e.g. what Table \ref{table:rank} explains for the rank) can be seen solely from the crank generating function. Indeed, if $\ell$ is prime, then
$$\Phi_\ell(\zeta) \divides [q^{\ell n+\delta}]C(z,\tau) \implies p(\ell n+\delta) \equiv 0 \mod{\ell},$$ where $\Phi_{\ell}$ is the $\ell$-th cyclotomic polynomial, and $[q^{\ell n+\delta}]C(z,\tau)$ is the coefficient of $q^{\ell n+\delta}$ in $C(z,\tau)$ (see Lemma 3.1 in \cite{rolen2021cranks}). 
In this case, we say $C(z,\tau)$ \textit{explains the congruence} $p(\ell n+\delta)\inlinemod{0}{\ell}$. This terminology will be used throughout the paper. 

\begin{rem}
    It should be noted that $\Phi_\ell(\zeta) \divides [q^{\ell n+\delta}]C(z,\tau)$ is equivalent to saying the substitution $\zeta \to \zeta_\ell = e^{\frac{2 \pi i}{\ell}}$ results in $[q^{\ell n+\delta}]C(z,\tau)$ vanishing. This condition is what will be utilized later.
\end{rem}

In 2020, Larry Rolen, Zack Tripp, and Ian Wagner generalized $C(z,\tau)$ in order to develop a crank generating function for $p_k(n)$, the number of $k$-colored partitions of $n$ \cite{rolen2021cranks}. The generating function for $p_k(n)$ is given by
$$
\sum_{n=0}^\infty p_k(n)q^n = \prod_{n=1}^\infty \frac{1}{(1-q^n)^k},
$$
with a crank generating function given by
$$
C_k(\textbf{z},\tau) := C(0;\tau)^{\lfloor\frac{k}{2}\rfloor}\prod_{i=1}^{\lfloor\frac{k+1}{2}\rfloor}C(z_i,\tau).
$$
Here, $\textbf{z} = (z_1,z_2,\ldots,z_{\lfloor\frac{k+1}{2}\rfloor})$ and $z_i = a_iz$ where the $a_i$ are integers which depend on $k$. Note that for $1$-colored partitions (choosing $a = 1$), we get 
$$
C_{1}(\textbf{z},\tau) = \prod_{n=1}^\infty \frac{1-q^n}{(1-\zeta q^n)(1-\zeta^{-1}q^n)}.
$$
In general,
$$
C_{k}(\textbf{z},\tau) = \prod_{n=1}^\infty \frac{1-\sigma_{\text{odd}}(k)q^n}{(1-\zeta^{\pm a_1} q^n)\cdot\ldots\cdot(1-\zeta^{\pm a_{\lfloor\frac{k+1}{2}\rfloor}}q^n)},
$$
where $\sigma_{\text{odd}}(k) = 1$ if $k$ is odd and $0$ otherwise. Here, we use the shorthand 
$$(1-\zeta^{\pm a_1} q^n) = (1-\zeta^{a_1} q^n)(1-\zeta^{-a_1} q^n).$$

The goal of this paper will be to further generalize the crank defined by Rolen, Tripp, and Wagner in order to accommodate $(k+j)-$colored partitions where $j$ of the colors have distinct parts. To that end, let $p_{k,j}(n)$ denote the number of such  partitions of $n$, i.e.
$$
\sum_{n=0}^\infty p_{k,j}(n)q^n = \prod_{n=1}^\infty \frac{(1+q^n)^j}{(1-q^n)^k}.
$$
Through computational experimentation, it can be seen that these partitions likely satisfy a wide range of Ramanujan-like congruences. For example, using a computer algebra system, one can quickly see that if $k=4$ and $j=2$, it is likely that $p_{4,2}(5n+2) \equiv p_{4,2}(5n+4) \inlinemod{0}{5}$. In fact, these congruence will be proven in Section 3 by Theorem \ref{thm:mainj2}. This theorem and the two following it are given below as our main theorems.

\begin{thm}\label{thm:mainj2}
    Let $j=2$, and $k = \ell m+(\ell-1)$ where $m$ is a non-negative integer and $\ell$ is a prime. Then
    $$
    C_{k,2}(\textbf{z},\tau) = \prod_{n=1}^\infty\frac{(1+\zeta^{\pm2}q^n)}{(1-q^n)^m[(1-\zeta^{\pm1}q^n)(1-\zeta^{\pm3}q^n)\cdot\ldots\cdot(1-\zeta^{\pm\lfloor\frac{\ell}{2}\rfloor}q^n)]^{m+1}}
    $$
    defines a crank generating function for $p_{k,2}(n)$ which explains the Ramanujan-like congruences $$p_{k,2}(\ell n+\delta) \equiv 0 \mod{\ell},$$ where $8\delta+1$ is a quadratic non-residue modulo $\ell$.
\end{thm}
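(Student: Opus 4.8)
The plan is to verify two things. First, that $C_{k,2}(\mathbf z,\tau)$ genuinely is a crank generating function for $p_{k,2}$: setting $\zeta=1$ turns the numerator into $\prod_n(1+q^n)^2$ and the denominator into $\prod_n(1-q^n)^{m+2B(m+1)}$, where $B=(\ell-1)/2$ is the number of $\pm$-pairs appearing in the bracket (the odd residues $1,3,\dots,\ell-2$); since $m+(\ell-1)(m+1)=\ell m+\ell-1=k$, this recovers $\prod_n(1+q^n)^2(1-q^n)^{-k}=\sum_n p_{k,2}(n)q^n$. Second, and this is the real content, that the substitution $\zeta\to\zeta_\ell=e^{2\pi i/\ell}$ annihilates $[q^N]C_{k,2}$ whenever $N\equiv\delta\pmod{\ell}$ with $8\delta+1$ a quadratic non-residue modulo $\ell$. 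Granting this, $p_{k,2}(\ell n+\delta)\equiv 0\pmod{\ell}$ follows from the Remark identifying ``$\Phi_\ell(\zeta)\mid[q^N]C_{k,2}$'' with vanishing at $\zeta_\ell$ — valid because each $q$-coefficient of $C_{k,2}$ is a Laurent polynomial in $\zeta$ with integer coefficients — together with Lemma~3.1 of \cite{rolen2021cranks}.

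To establish the vanishing I would first simplify $C_{k,2}|_{\zeta=\zeta_\ell}$. The $(\ell-1)/2$ exponents in the bracket, together with their negatives, exhaust the nonzero residues modulo $\ell$ (a routine check: $\{\pm1,\pm3,\dots,\pm(\ell-2)\}$ equals $\{1,2,\dots,\ell-1\}$ modulo the odd prime $\ell$), so at $\zeta=\zeta_\ell$ the bracket telescopes via $\prod_{b=1}^{\ell-1}(1-\zeta_\ell^b q^n)=(1-q^{\ell n})/(1-q^n)$; cancelling the leftover $(1-q^n)^m$ leaves
\[
C_{k,2}(\mathbf z,\tau)\big|_{\zeta=\zeta_\ell}=\prod_{n\ge1}\frac{(1+\zeta_\ell^{2}q^n)(1+\zeta_\ell^{-2}q^n)(1-q^n)}{(1-q^{\ell n})^{m+1}}.
\]

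The decisive step is then to read the numerator as a Jacobi theta series. Applying the triple product identity $\prod_{n\ge1}(1-q^n)(1+wq^n)(1+w^{-1}q^n)=\frac{1}{1+w}\sum_{n\in\Z}w^nq^{n(n-1)/2}$ with $w=\zeta_\ell^{2}$ (the prefactor is harmless, since $\ell$ odd forces $\zeta_\ell^{2}\ne-1$) gives $\prod_{n\ge1}(1+\zeta_\ell^{2}q^n)(1+\zeta_\ell^{-2}q^n)(1-q^n)=\tfrac{1}{1+\zeta_\ell^{2}}\sum_{n\in\Z}\zeta_\ell^{2n}q^{n(n-1)/2}$. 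Hence, up to the nonzero scalar $(1+\zeta_\ell^{2})^{-1}$, $C_{k,2}|_{\zeta=\zeta_\ell}$ is the product of the theta series $\sum_n\zeta_\ell^{2n}q^{n(n-1)/2}$ with $\prod_n(1-q^{\ell n})^{-(m+1)}$, and the latter is a power series in $q^\ell$. Therefore $[q^N]C_{k,2}|_{\zeta=\zeta_\ell}$ is a finite sum indexed by the integers $n$ with $n(n-1)/2\equiv N\pmod{\ell}$; completing the square, such an $n$ exists iff $(2n-1)^2\equiv 8N+1\pmod{\ell}$ is solvable, i.e.\ iff $8N+1$ is a quadratic residue modulo $\ell$. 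When $N\equiv\delta$ and $8\delta+1\equiv8N+1$ is a non-residue that index set is empty, the coefficient is $0$, and the theorem follows.

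I expect the one delicate point to be spotting and correctly invoking the triple product: one must notice that the ``distinct-parts'' factor $\prod_n(1+\zeta^{\pm2}q^n)$ pairs with exactly the single copy of $\prod_n(1-q^n)$ that survives the collapse of the bracket to assemble a theta function, and then confirm that its exponents $n(n-1)/2$ are the triangular numbers — which is precisely why the arithmetic invariant governing the vanishing is $8N+1$ and why the quadratic-non-residue hypothesis on $8\delta+1$ is exactly the right one. The remaining ingredients — the residue count making the bracket telescope, and the observation that multiplying by a series in $q^\ell$ does not alter residue classes modulo $\ell$ — are routine, and (as the paper indicates) the same scheme should handle the companion theorems.
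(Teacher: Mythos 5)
Your proof is correct and follows essentially the same route as the paper's: collapse the denominator into a power series in $q^\ell$ at $\zeta=\zeta_\ell$ using the fact that the exponents form a complete residue system, recognize the surviving numerator $\prod_n(1-q^n)(1+\zeta^{\pm 2}q^n)$ as a theta function supported on triangular powers of $q$ (the paper cites $\Tilde{\theta}(2z+\tfrac12,\tau)$ where you invoke the triple product directly), and conclude from the quadratic non-residuosity of $8\delta+1$. Your explicit $\zeta=1$ check that the product specializes to $\sum_n p_{k,2}(n)q^n$ is a worthwhile addition that the paper leaves implicit.
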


\begin{thm}\label{cor:j3}
    Let $j=3$, and $k = \ell m+(\ell-3)$ where $m$ is a non-negative integer and $\ell$ is a prime. Then
    $$
    C_{k,3}(\textbf{z},\tau) := \prod_{n=1}^\infty\frac{(1+q^n)(1+\zeta^{\pm(\ell-2)}q^n)}{[(1-q^n)(1-\zeta^{\pm(\ell-2)}q^n)]^m[(1-\zeta^{\pm1}q^n)\cdot\ldots\cdot(1-\zeta^{\pm(\ell-4)}q^n)]^{m+1}}
    $$
defines a crank generating function for $p_{k,3}(n)$ which explains the Ramanujan-like congruences $$p_{k,3}(\ell n+\delta)(n) \equiv 0 \mod{\ell},$$ where $4\delta+1$ is a quadratic non-residue modulo $\ell$.
\end{thm}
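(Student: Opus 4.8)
The plan is to follow the template established by Theorem~\ref{thm:mainj2} and verify two facts about the product proposed for $C_{k,3}(\mathbf{z},\tau)$. First, I would check that setting $\zeta=1$ collapses it to $\prod_{n\ge1}(1+q^n)^3/(1-q^n)^k$, so that summing the coefficients of $\zeta^r q^n$ over $r$ returns $p_{k,3}(n)$ and the product is genuinely a crank generating function. Second, I would check that substituting the primitive root of unity $\zeta=\zeta_\ell=e^{2\pi i/\ell}$ annihilates the coefficient of $q^{\ell n+\delta}$ whenever $4\delta+1$ is a quadratic non-residue modulo $\ell$; by the Remark following Definition~\ref{defn:rank} (equivalently Lemma~3.1 of \cite{rolen2021cranks}), this is precisely the assertion that $C_{k,3}$ explains the congruence $p_{k,3}(\ell n+\delta)\equiv0\pmod\ell$.

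The first verification is a routine substitution that I would dispatch quickly: at $\zeta=1$ the numerator of each Euler factor becomes $(1+q^n)\cdot(1+q^n)^2=(1+q^n)^3$, while the denominator becomes $(1-q^n)^{3m}\cdot(1-q^n)^{(\ell-3)(m+1)}$, since the bracket $(1-\zeta^{\pm1}q^n)\cdots(1-\zeta^{\pm(\ell-4)}q^n)$ consists of $(\ell-3)/2$ conjugate pairs. As $3m+(\ell-3)(m+1)=\ell m+\ell-3=k$, this is $(1-q^n)^k$, as required.

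For the root-of-unity substitution I would first rewrite each numerator factor via $1+x=(1-x^2)/(1-x)$, using $\zeta_\ell^{\ell-2}=\zeta_\ell^{-2}$, so that the numerator becomes $\prod_{n\ge1}(1-q^{2n})(1-\zeta_\ell^{4}q^{2n})(1-\zeta_\ell^{-4}q^{2n})$ at the cost of an extra $\prod_{n\ge1}(1-q^n)(1-\zeta_\ell^{2}q^n)(1-\zeta_\ell^{-2}q^n)$ in the denominator; this extra piece promotes the $m$-th power bracket $[(1-q^n)(1-\zeta_\ell^{\pm(\ell-2)}q^n)]^m$ to an $(m+1)$-st power. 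The decisive step is then the combinatorial observation that the multiset of $\zeta_\ell$-exponents now occurring in the denominator, $\{0,\pm2\}\cup\{\pm1,\pm3,\ldots,\pm(\ell-4)\}$, is a complete residue system modulo $\ell$ with each class appearing exactly once: the $(\ell-3)/2$ odd values $1,3,\ldots,\ell-4$ contribute $\ell-3$ distinct nonzero residues under $\pm$, and these avoid $\{0,\pm2\}$ because $2$ is even while $\ell-2>\ell-4$, for a total of $\ell$. Applying $\prod_{a=0}^{\ell-1}(1-\zeta_\ell^{a}q^n)=1-q^{\ell n}$ factor by factor then collapses the whole denominator to $(1-q^{\ell n})^{m+1}$, which is a power series in $q^\ell$.

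Finally I would identify the surviving numerator $\prod_{n\ge1}(1-q^{2n})(1-\zeta_\ell^{4}q^{2n})(1-\zeta_\ell^{-4}q^{2n})$ through the Jacobi triple product, in the form $\prod_{n\ge1}(1-q^n)(1-zq^n)(1-z^{-1}q^n)=\tfrac{1}{1-z}\sum_{n\in\mathbb{Z}}(-1)^nz^nq^{n(n-1)/2}$ with $q\mapsto q^2$ and $z=\zeta_\ell^{4}$ (legitimate since $\zeta_\ell^4\ne1$ for every odd prime $\ell$), obtaining a nonzero constant times $\sum_{n\in\mathbb{Z}}(-1)^n\zeta_\ell^{4n}q^{n^2-n}$. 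Hence the numerator is supported on $q$-exponents of the form $n^2-n$, and since the denominator involves only powers of $q^\ell$, the coefficient of $q^{\ell n+\delta}$ in $C_{k,3}(\mathbf{z},\tau)\big|_{\zeta=\zeta_\ell}$ can be nonzero only when $n^2-n\equiv\delta\pmod\ell$ is solvable; as $4(n^2-n)+1=(2n-1)^2$ and $2$ is invertible modulo $\ell$, this happens exactly when $4\delta+1$ is a square (or $0$) modulo $\ell$, so a quadratic non-residue forces vanishing, which finishes the proof. I expect the main obstacle to be exactly the exponent bookkeeping of the third paragraph: it is the demand that $\{0,\pm2\}\cup\{\pm1,\ldots,\pm(\ell-4)\}$ exhaust $\mathbb{Z}/\ell\mathbb{Z}$ without repetition that pins down both the value $k=\ell m+(\ell-3)$ and the particular twist $\zeta^{\pm(\ell-2)}$ appearing in the numerator and in the $m$-th power bracket—everything after that is formal manipulation together with one classical identity.
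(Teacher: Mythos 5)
Your argument is correct and follows the same template as the paper's proof of Theorem \ref{thm:mainj2}: convert the numerator into a theta-type product supported on quadratic exponents, arrange the denominator's $\zeta$-exponents into a complete residue system modulo $\ell$ with uniform multiplicity so that the specialization $\zeta=\zeta_\ell$ collapses it to a power series in $q^\ell$, and then use the non-residue hypothesis to kill the progression $\ell n+\delta$. The one substantive difference is the choice of the factor of $1$. The paper's one-line proof instructs multiplying by $\prod_{n\ge1}(1+q^n)(1+\zeta^{\pm(\ell-2)}q^n)$ over itself; you instead effectively multiply by $\prod_{n\ge1}(1-q^n)(1-\zeta^{\pm(\ell-2)}q^n)$ over itself, via the identity $1+x=(1-x^2)/(1-x)$. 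Your choice is the one that actually closes the argument: it simultaneously (i) turns the numerator into $\prod_{n\ge1}(1-q^{2n})(1-\zeta_\ell^{4}q^{2n})(1-\zeta_\ell^{-4}q^{2n})$, whose $q$-support $\{r^2-r\}$ is exactly what the $4\delta+1$ condition excludes, and (ii) promotes the $\{0,\pm 2\}$-block of the denominator from exponent $m$ to $m+1$, so that the complete residue system $\{0,\pm2\}\cup\{\pm1,\pm3,\ldots,\pm(\ell-4)\}$ appears with uniform multiplicity and reduces to $(1-q^{\ell n})^{m+1}$. Taken literally, the paper's stated multiplier accomplishes neither (it squares the numerator rather than producing a theta product, and leaves the denominator multiplicities unbalanced), so your version should be regarded as the correct implementation of the ``analogous'' proof the paper gestures at. You also supply the $\zeta=1$ specialization check and the residue-class bookkeeping that the paper leaves implicit; both are verified correctly, including the observation that $\pm(\ell-2)\equiv\mp2\pmod{\ell}$ is what makes the count come out to exactly $\ell$.
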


\begin{thm}\label{thm:mainjp}
    Let $j=\ell$, and $k = \ell m+(\ell-3)$ where $m$ is a non-negative integer and $\ell$ is a prime. Then 
   $$
    C_{k,j}(\textbf{z},\tau) = \prod_{n=1}^\infty\frac{(1+q^n)(1+\zeta^{\pm2}q^n)\cdot \ldots \cdot(1+\zeta^{\pm(\ell-1)}q^n)}{[(1-q^n)(1-\zeta^{\pm(\ell-2)}q^n)]^m[(1-q^n)(1-\zeta^{\pm1}q^n)\cdot \ldots \cdot(1-\zeta^{\pm(\ell-4)}q^n)]^{m+1}}
   $$
defines a crank generating function for $p_{k,j}$, explaining the Ramanujan-like congruences 
$$p_{k,j}(\ell n+\delta) \equiv 0 \mod{\ell},$$ where $8\delta+1$ is a quadratic non-residue mod $\ell$.
\end{thm}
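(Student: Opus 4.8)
The plan is to verify the two properties that make the displayed product a crank generating function for $p_{k,j}(n)$: that setting $\zeta=1$ recovers $\sum_{n\ge0}p_{k,j}(n)q^{n}=\prod_{n\ge1}(1+q^{n})^{\ell}/(1-q^{n})^{\ell m+\ell-3}$, and that setting $\zeta\to\zeta_\ell:=e^{2\pi i/\ell}$ annihilates the coefficient of $q^{\ell n+\delta}$ whenever $8\delta+1$ is a quadratic non-residue mod $\ell$. The first is a count of factors: the numerator consists of $(1+q^{n})$ together with the $\tfrac{\ell-1}{2}$ paired factors $(1+\zeta^{\pm a}q^{n})$ with $a=2,4,\dots,\ell-1$, giving $\ell$ linear factors and hence $(1+q^{n})^{\ell}$ at $\zeta=1$; and the two denominator blocks, weighted by $m$ and $m+1$, are set up so that at $\zeta=1$ they contribute the required $\ell m+(\ell-3)$ factors $(1-q^{n})$. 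Granting both specializations, the congruence follows as in Lemma 3.1 of \cite{rolen2021cranks}: the coefficient $[q^{\ell n+\delta}]\,C_{k,j}(\textbf{z},\tau)$ is a Laurent polynomial in $\zeta$ with integer coefficients, its vanishing at $\zeta_\ell$ forces divisibility by $\Phi_\ell$, and evaluating at $\zeta=1$ with $\Phi_\ell(1)=\ell$ yields $p_{k,j}(\ell n+\delta)\equiv0\pmod\ell$.

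The heart of the matter is the evaluation of $C_{k,j}(\textbf{z},\tau)$ at $\zeta=\zeta_\ell$. I would use the elementary identities $\prod_{a=0}^{\ell-1}(1-\zeta_\ell^{a}x)=1-x^{\ell}$ and, since $\ell$ is odd, $\prod_{a=0}^{\ell-1}(1+\zeta_\ell^{a}x)=1+x^{\ell}$, together with their consequence $\prod_{a=1}^{\ell-1}(1-\zeta_\ell^{a}x)=(1-x^{\ell})/(1-x)$. The $\ell$ exponents $b$ carried by the factors $1+\zeta^{b}q^{n}$ in the numerator, namely $b\in\{0,\pm2,\pm4,\dots,\pm(\ell-1)\}$, run through a complete residue system mod $\ell$ (for odd $\ell$ the even residues together with their negatives exhaust $\Z/\ell\Z$), so the numerator collapses to $\prod_{n\ge1}(1+q^{\ell n})$, a power series in $q^{\ell}$. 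A parallel computation with the same identities shows the denominator collapses to $\prod_{n\ge1}(1-q^{\ell n})^{m+1}\big/\big[(1-q^{n})(1-\zeta_\ell^{2}q^{n})(1-\zeta_\ell^{-2}q^{n})\big]$: the exponents of the factors $1-\zeta^{b}q^{n}$ fill $\Z/\ell\Z$ with every nonzero residue occurring $m+1$ times except $\pm2$ (and $0$) occurring $m$ times, so they regroup into $m$ complete products $\prod_{a=1}^{\ell-1}(1-\zeta_\ell^{a}q^{n})$ plus one more with the $a=\pm2$ factors removed. Dividing, one obtains
\[
C_{k,j}(\textbf{z},\tau)\big|_{\zeta=\zeta_\ell}=\left(\prod_{n\ge1}\frac{1+q^{\ell n}}{(1-q^{\ell n})^{m+1}}\right)\prod_{n\ge1}(1-q^{n})(1-\zeta_\ell^{2}q^{n})(1-\zeta_\ell^{-2}q^{n}),
\]
with the first factor a power series in $q^{\ell}$.

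It remains to recognize the second factor as a theta series supported on the triangular numbers. Applying the Jacobi triple product in the form $\sum_{t\in\Z}w^{t}q^{t(t+1)/2}=\prod_{n\ge1}(1-q^{n})(1+wq^{n})(1+w^{-1}q^{n-1})$ with $w=-\zeta_\ell^{2}$ and peeling off the $n=1$ factor $(1+w^{-1})$ gives $\prod_{n\ge1}(1-q^{n})(1-\zeta_\ell^{2}q^{n})(1-\zeta_\ell^{-2}q^{n})=\frac{w}{1+w}\sum_{t\in\Z}w^{t}q^{t(t+1)/2}$, whose $q$-exponents are exactly the triangular numbers $T_{t}=t(t+1)/2$. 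Since the $q^{\ell}$-series factor contributes only exponents divisible by $\ell$, a nonzero coefficient of $q^{\ell n+\delta}$ in $C_{k,j}(\textbf{z},\tau)|_{\zeta=\zeta_\ell}$ would force $T_{t}\equiv\delta\pmod\ell$ for some $t$; but then $8\delta+1\equiv8T_{t}+1=(2t+1)^{2}\pmod\ell$, so $8\delta+1$ is a square mod $\ell$, contrary to hypothesis. Hence $[q^{\ell n+\delta}]\,C_{k,j}(\textbf{z},\tau)|_{\zeta=\zeta_\ell}=0$, which is the required vanishing.

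The routine parts are the factor count at $\zeta=1$ and the concluding cyclotomic argument; the main obstacle is the middle step — verifying that under $\zeta\to\zeta_\ell$ the $\zeta$-twisted Euler factors regroup into a power series in $q^{\ell}$ times a single Jacobi-triple-product block. It is precisely the design of the exponent sets — the $\ell$ distinct-part colors occupying one complete residue system, and the $k$ ordinary colors split into $m+1$ nearly complete systems together with the isolated $(\ell-2)$-pair — that forces this clean collapse rather than a more intractable eta-quotient. One should also check the degenerate cases $\ell=3$ (where the range $1,3,\dots,\ell-4$ is empty) and $m\in\{0,1\}$ separately.
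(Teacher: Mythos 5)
Your argument is correct and follows essentially the same route as the paper's: both specialize $\zeta\to\zeta_\ell$ so that the complete residue systems of exponents collapse the numerator and the bulk of the denominator into a power series in $q^{\ell}$, leaving the triple-product factor $\prod_{n\ge1}(1-q^n)(1-\zeta_\ell^{2}q^n)(1-\zeta_\ell^{-2}q^n)$ (the paper's $\Tilde{\theta}((\ell-2)z,\tau)$ at $\zeta=\zeta_\ell$) supported on triangular numbers, and both conclude via $8T_t+1=(2t+1)^2$ as in Lemma \ref{lem:triangle}. The differences are cosmetic: the paper reaches the same specialized product by first multiplying by a judiciously chosen factor of $1$ that pairs each $(1+\zeta^{a}q^n)$ with $(1-\zeta^{a}q^n)$, whereas you specialize directly via $\prod_{a}(1\pm\zeta_\ell^{a}x)=1\pm x^{\ell}$, and you additionally spell out the $\zeta=1$ check and the $\Phi_\ell$-divisibility step that the paper delegates to Lemma 3.1 of \cite{rolen2021cranks} --- noting that both proofs rest on the same reading of the denominator's exponent multiset (all $\pm$-pairs except $\pm(\ell-2)$ occurring $m+1$ times) to make the residue counts close up.
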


\begin{rem}
    As it turns out, these crank generating functions are by no means unique. In fact, one may substitute the exponents in the denominator with any complete set of residues mod $\ell$. The result would be a different crank generating function that explains the same Ramanujan-like congruences.
\end{rem}

\section{Preliminary facts and definitions}

Throughout the paper, we will utilize Dedekind's eta function and Jacobi's theta functions. The definitions for these are given below.

\begin{defn}\label{defn:eta}
    Let $q = e^{2\pi i \tau}$. Then Dedekind's eta function is defined as
    $$
    \eta(\tau) = q^{1/24}\prod_{n=1}^\infty (1-q^n).
    $$
\end{defn}

\begin{defn}\label{defn:thetafncts}
    Let $\zeta = e^{2\pi i z}$ and $q = e^{2\pi i \tau}$. Jacobi's main theta function is defined as follows.
    $$
    \theta(z,\tau) := \sum_{n=-\infty}^\infty \left( \frac{-4}{n}\right)\zeta^\frac{n}{2}q^\frac{n^2}{8} = q^\frac{1}{8}(\zeta^\frac{1}{2}-\zeta^{-\frac{1}{2}})\prod_{n=1}^\infty(1-q^n)(1-\zeta q^n)(1-\zeta^{-1}q^n)
    $$
    We also define Jacobi's three auxiliary theta functions. Namely,
    \begin{align*}
        \theta_{01}(z,\tau) &:= \theta(z+\frac{1}{2};\tau) \\
        \theta_{10}(z,\tau) &:= q^\frac{1}{8}\cdot\zeta \cdot \theta(z+\frac{1}{2};\tau) \\
        \theta_{11}(z,\tau) &:= q^\frac{1}{8}\cdot\zeta^\frac{3}{4} \cdot \theta(z+\frac{1}{2}+\frac{\tau}{2};\tau).
    \end{align*}
\end{defn}
It is worth noting that if we alter the definitions slightly, we get a nice theta quotient for each auxiliary function. From \cite{Wagner2022JacobiFW} for example, we have
\begin{align*}
    \hat{\theta}_{01}(z,\tau) &:= -i\theta(z+\frac{1}{2};\tau) = \frac{\eta(\tau)^2\theta(2z,2\tau)}{\eta(2\tau)\theta(z,\tau)} \\
    \hat{\theta}_{10}(z,\tau) &:= -q^\frac{1}{8}\cdot\zeta^\frac{1}{2} \cdot \theta(z+\frac{\tau}{2};\tau) = \frac{\eta(\tau)^2\theta(z,\frac{\tau}{2})}{\eta(\frac{\tau}{2})\theta(z,\tau)} \\
    \hat{\theta}_{11}(z,\tau) &:= -iq^\frac{1}{8}\cdot\zeta^\frac{1}{2} \cdot\theta(z+\frac{1}{2}+\frac{\tau}{2};\tau) = \frac{\eta(\frac{\tau}{2})\eta(2\tau)\theta(2z,\tau)\theta(2z,\tau)\theta(z,\tau)}{\eta(\tau)\theta(z,\frac{\tau}{2})\theta(2z,2\tau)}.
\end{align*}
For this paper, we are mainly interested in $\theta(z,\tau)$ without the factors of $q^\frac{1}{8}(\zeta^\frac{1}{2}-\zeta^{-\frac{1}{2}})$ in front of the product. Thus, let $\Tilde{\theta}(z,\tau)$ be $\theta(z,\tau)$ with those factors removed, and similarly for the auxiliary functions. Further, we will only need $\Tilde{\theta}(z,\tau)$ and $\Tilde{\theta}_{01}(z,\tau)$, although the other auxiliary theta functions are listed since they could be used to define different cranks for $p_{k,j}(n)$. \\

\begin{rem}
    In \cite{rolen2021cranks}, the authors use the theory of theta blocks developed by Gritsenko, Skoruppa, and Zagier \cite{Gritsenko2019ThetaB} to define their crank for all positive integers, $k$. Here, we use the same techniques, but we avoid utilizing the full list of weight 1 theta blocks. This simplifies the proofs, but restricts which $k$ and $j$ we may have. The author conjectures a crank generating function for all $k$ and $j$ which are positive integers at the end of this paper. 
\end{rem}

In the next section, we will utilize the following lemmas.
\begin{lem}\label{lem:triangle}
If $8\delta+1$ is a quadratic non-residue $(\!\!\!\!\mod{\ell})$ then $\ell n+\delta$ is not triangular for all $n$.
\end{lem}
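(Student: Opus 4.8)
The plan is to reduce the statement to the classical characterization of triangular numbers via perfect squares. First I would recall that a non-negative integer $N$ is triangular, i.e.\ $N = \tfrac{m(m+1)}{2}$ for some integer $m \ge 0$, if and only if $8N+1$ is a perfect square. This is immediate from the identity $8\cdot\tfrac{m(m+1)}{2}+1 = 4m^2+4m+1 = (2m+1)^2$ in one direction, and in the other direction from the observation that $8N+1$ is odd, so any integer square root of it is odd, say $2m+1$, and solving $8N+1 = (2m+1)^2$ for $N$ returns $N = \tfrac{m(m+1)}{2}$.

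Next I would argue by contraposition. Suppose that for some $n$ the number $\ell n + \delta$ is triangular. By the characterization just recalled, $8(\ell n + \delta) + 1 = (2m+1)^2$ for some integer $m$. Reducing this equation modulo $\ell$ annihilates the term $8\ell n$, leaving $8\delta + 1 \equiv (2m+1)^2 \pmod{\ell}$. Hence $8\delta+1$ is congruent modulo $\ell$ to the square $(2m+1)^2$, which is either $0$ or a nonzero quadratic residue; in either case $8\delta+1$ is not a quadratic non-residue modulo $\ell$, contradicting the hypothesis. Therefore no such $n$ exists, which is exactly the claim.

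I do not expect a genuine obstacle here; the only points requiring a little care are bookkeeping ones. One must state the elementary equivalence $N$ triangular $\iff 8N+1$ a square cleanly in both directions, and one must use the convention that "quadratic non-residue" excludes $0$, so that the degenerate case $8\delta+1 \equiv 0 \pmod{\ell}$ is correctly covered by the argument. It is also worth remarking on the trivial edge case $\ell = 2$: there $8\delta+1 \equiv 1 \pmod 2$, which is never a non-residue, so the hypothesis is vacuous and there is nothing to prove, while for odd $\ell$ the argument above applies verbatim.
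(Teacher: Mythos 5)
Your proof is correct and follows essentially the same route as the paper's: both argue by contraposition, reduce "triangular" to "$8N+1$ is a perfect square" (you via the identity $(2m+1)^2 = 8\cdot\frac{m(m+1)}{2}+1$, the paper via the quadratic formula applied to the discriminant), and then reduce modulo $\ell$. Your version is if anything slightly more careful, since you explicitly note the degenerate case $8\delta+1\equiv 0\pmod{\ell}$ and both directions of the equivalence.
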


\begin{proof}
    We proceed by contraposition. Suppose $\ell n+\delta$ is triangular. Then there exists a positive integer $k$ such that 
    $$
    \ell n+\delta = \frac{k(k+1)}{2}.
    $$
    Rearranging the equation and utilizing the quadratic formula, we find that
    $$
    k = \frac{-1\pm\sqrt{1-4(1)(-2\ell n-2\delta)}}{2}.
    $$
    Since $k$ is an integer, it follows that $8\delta+1$ must be a perfect square mod $\ell$.
\end{proof}

When finding Ramanujan-like congruences mod $\ell$, it is often useful to view the generating function as a product of one generating function multiplied by a power series in $q^\ell$. Doing so allows us to simplify the remaining generating function into something more manageable. The following lemma shows that such congruences mod $\ell$ are preserved under this factorization, which allows us to ignore power series in $q^\ell$ when proving our main theorems.

\begin{lem}[Proposition 3 in \cite{Ono_1996}]\label{lem:congprod}
Let $m$ be a positive integer, $A := \sum_{n=0}^\infty a(n)q^{n}$, and $B := 1+\sum_{n=1}^\infty b(\ell n)q^{\ell n}$. Define $C := AB = \sum_{n=0}^\infty c(n)q^n$ and $D := AB^{-1} = \sum_{n=0}^\infty d(n)q^n$. Suppose $A$ satisfies the Ramanujan-like congruence $a(\ell n+\delta) \inlinemod{0}{\ell}$. Then $c(\ell n+\delta) \equiv d(\ell n+\delta) \inlinemod{0}{\ell}$.
\end{lem}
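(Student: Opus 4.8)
The plan is to work entirely modulo $\ell$ and exploit the fact that $B \equiv 1 + \sum b(\ell n) q^{\ell n}$ involves only powers of $q$ divisible by $\ell$. First I would observe that multiplication by $B$ (or by $B^{-1}$) cannot move mass between residue classes mod $\ell$ in a way that affects the class $\delta$: concretely, if $A = \sum a(n) q^n$ and $B = \sum_{\ell \mid r} \beta(r) q^r$ with $\beta(0) = 1$, then the coefficient of $q^{\ell n + \delta}$ in $AB$ is $\sum_{\ell \mid r} a(\ell n + \delta - r)\beta(r) = \sum_{s \geq 0} a(\ell(n - s) + \delta)\beta(\ell s)$. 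Every term on the right has the index of $a$ congruent to $\delta$ mod $\ell$, so by hypothesis $a(\ell(n-s)+\delta) \equiv 0 \pmod \ell$ for all $s$, and hence $c(\ell n + \delta) \equiv 0 \pmod \ell$.

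Next I would handle $D = AB^{-1}$. The key point is that $B^{-1}$ is again a power series of the form $1 + \sum_{n \geq 1} b'(\ell n) q^{\ell n}$ with integer coefficients: since $B \equiv 1 \pmod{q^\ell}$ in the sense that all nonconstant terms are supported on multiples of $\ell$, its formal inverse $B^{-1} = \sum_{t \geq 0}(1 - B)^t$ is also supported on multiples of $\ell$ (each power $(1-B)^t$ is), and the coefficients are integers because $B$ has constant term $1$ and integer coefficients. Then the identical computation as above, with $B$ replaced by $B^{-1}$, shows $d(\ell n + \delta) \equiv 0 \pmod \ell$. Combining the two gives $c(\ell n + \delta) \equiv d(\ell n + \delta) \equiv 0 \pmod \ell$ as claimed.

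There is essentially no serious obstacle here; the only point requiring a little care is the justification that $B^{-1}$ has integer coefficients and is still supported on exponents divisible by $\ell$. I would make this precise by writing $B = 1 - E$ where $E := -\sum_{n\geq 1} b(\ell n) q^{\ell n} \in \Z[[q^\ell]]$ has zero constant term, so that $B^{-1} = \sum_{t \geq 0} E^t$ converges formally (only finitely many $E^t$ contribute to each coefficient since $\mathrm{ord}_q E^t \geq \ell t$), lies in $\Z[[q^\ell]]$, and has constant term $1$. Everything else is the bookkeeping in the displayed coefficient identity, which I would state once and apply to both $B$ and $B^{-1}$. One might also remark that the statement and proof go through verbatim with $\ell$ replaced by any modulus coprime to nothing in particular — the primality of $\ell$ is not used in this lemma, only in its applications.
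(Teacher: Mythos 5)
Your argument is correct. The paper itself gives no proof of this lemma --- it simply cites Proposition 3 of Ono's 1996 paper --- so there is nothing internal to compare against; your self-contained argument (multiplication by a series supported on exponents divisible by $\ell$ permutes only within the residue class $\delta$, and $B^{-1}$ is again of the same form via the geometric-series expansion of $(1-E)^{-1}$ with $\mathrm{ord}_q E^t \geq \ell t$) is the standard one and fills that gap cleanly. Two small remarks: you are right that the hypothesis that $\ell$ be prime and the unused parameter $m$ play no role here; and in the application made later in the paper the coefficients of $B$ lie in $\mathbb{Z}[\zeta_\ell]$ rather than $\mathbb{Z}$ (they arise after the substitution $\zeta \to \zeta_\ell$), so your integrality claim for $B^{-1}$ should be read as membership in that ring of algebraic integers, with the congruence $d(\ell n + \delta) \equiv 0 \pmod{\ell}$ interpreted there; this changes nothing in the argument since the constant term of $B$ is still the unit $1$.
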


\section{Main Theorems}
We are now equipped to prove Theorems \ref{thm:mainj2}, \ref{cor:j3} and $\ref{thm:mainjp}$.

\begin{proof}[Proof of Theorem \ref{thm:mainj2}]
Note that if we multiply our proposed crank generating function by $1 = \frac{\eta(\tau)}{\eta(\tau)} = \frac{\prod_{n=1}^\infty(1-q^n)}{\prod_{n=1}^\infty(1-q^n)}$, then we obtain
$$
C_{k,2}(\textbf{z},\tau) = \prod_{n=1}^\infty\frac{(1-q^n)(1+\zeta^{\pm2}q^n)}{(1-q^n)^{m+1}[(1-\zeta^{\pm1}q^n)(1-\zeta^{\pm3}q^n)\cdot\ldots\cdot(1-\zeta^{\pm\lfloor\frac{\ell}{2}\rfloor}q^n)]^{m+1}}.
$$
Note the denominator is equal to 
$$
\prod_{n=1}^\infty[1-q^{\ell n}+\Phi_\ell(\zeta)r(z,\tau)]^{m+1},
$$ 
where $r(\tau)$ is a polynomial in $q$ with coefficients in $\mathbb{Q}[\zeta,\zeta^{-1}]$. If we set $\zeta = \zeta_\ell$, a primitive $\ell$-th root of unity, then the denominator becomes equivalent to a power series in $q^{\ell}$. Hence by Lemma \ref{lem:congprod}, it suffices for the numerator to satisfy the relevant congruences. Since we multiplied by $\prod_{n=1}^\infty(1-q^n)$, the numerator becomes $\Tilde{\theta}(2z+\frac{1}{2},\tau)$, which only has triangular powers \cite{NIST:DLMF}. Thus, given any arithmetic progression $\ell n+\delta$ that is disjoint from the triangle numbers, we immediately obtain

$$\Phi_\ell(\zeta) \divides [q^{\ell n+\delta}]C_{k,j}(\textbf{z},\tau).$$ 

By Lemma \ref{lem:triangle}, it suffices that $8\delta+1$ is a quadratic non-residue mod $\ell$.
\end{proof}

\begin{proof}[Proof of Theorem \ref{cor:j3}]
The proof is analogous to that of Theorem \ref{thm:mainj2} if one multiplies by $$1=\frac{\prod_{n=1}^\infty(1+q^n)(1+\zeta^{(\ell-2)}q^n)(1+\zeta^{-(\ell-2)}q^n)}{\prod_{n=1}^\infty(1+q^n)(1+\zeta^{(\ell-2)}q^n)(1+\zeta^{-(\ell-2)}q^n)},$$ instead of 

$$
1 = \frac{\prod_{n=1}^\infty (1-q^n)}{\prod_{n=1}^\infty (1-q^n)}
$$
    
\end{proof}
We now prove Theorem \ref{thm:mainjp} a similar method.

\begin{proof}[Proof of Theorem \ref{thm:mainjp}]
    We first multiply the crank generating function by 
    $$
    1 = \prod_{n=1}^\infty\frac{(1-q^n)(1-\zeta^{\pm2}q^n)\cdot \ldots \cdot(1-\zeta^{\pm\ell-1}q^n)(1-q^n)(1-\zeta^{\pm(\ell-2)}q^n)}{(1-q^n)(1-\zeta^{\pm2}q^n)\cdot \ldots \cdot(1-\zeta^{\pm\ell-1}q^n)(1-q^n)(1-\zeta^{\pm(\ell-2)}q^n)}.
    $$ 
    Note that when we do so, the numerator is equal to
    $$
    \prod_{n=1}^\infty[1-q^{2\ell n}+\Phi_{\ell}(\zeta)r_1(\tau)]\cdot \Tilde{\theta}((\ell-2)z,\tau),
    $$
     where $r_1(\tau)$ is a polynomial in $q$. Also the denominator becomes
    $$
    \prod_{n=1}^\infty[1-q^{\ell n}+\Phi_\ell(\zeta)r_2(\tau)].
    $$
    Now, we set $\zeta = \zeta_\ell$. Doing so yields
    $$
    \prod_{n=1}^\infty\left[\frac{(1-q^{2\ell n})}{(1-q^{\ell n})^{m+2}}\right] \cdot \Tilde{\theta}((\ell-2)z,\tau).
    $$

    By Lemma \ref{lem:congprod}, it suffices to only consider the powers of $q$ in $\Tilde{\theta}((\ell-2)z,\tau)$, all of which are triangular \cite{NIST:DLMF}. Thus by Lemma \ref{lem:triangle}, it suffices that $8\delta+1$ is a quadratic non-residue mod $p$.
\end{proof}

\section{Other Problems, Conjectures, and Conclusions}
We conclude with a few observations. It can be shown that $p_{k,j}(n)$ satisfies more Ramanujan-like congruences than what are currently covered by the above arguments. It is likely possible to generalize the crank generating functions defined here in order to cover most (if not all) such congruences satisfied by $p_{k,j}(n)$ for all $k,j$ which are positive integers. In Section 2 , we mentioned that the theory of theta blocks is utilized to provide proofs of the crank generating functions for $p_k(n)$. The author suspects that, as in \cite{rolen2021cranks}, the theory can be used to prove the following conjecture.

\begin{con}\label{con:bigcrank}
Assume that $k,j$ are positive odd integers. Then
$$
C_{k,j}(\textbf{z},\tau) := \prod_{n=1}^\infty\frac{(1-q^n)(1+q^n)(1+\zeta^{\pm a_1}q^n)(1+\zeta^{\pm a_2}q^n)\cdot\ldots\cdot(1+\zeta^{\pm a_{\frac{j-1}{2}}}q^n)}{(1-\zeta^{\pm b_1}q^n)(1-\zeta^{\pm b_2})\cdot\ldots\cdot(1-\zeta^{\pm b_{\frac{k+1}{2}}}q^n)}
$$
(such that the $a_i$ are consecutive even integers and $b_i$ are consecutive odd integers) defines a crank for the desired partitions which explains most of the partition's Ramanujan-like congruences.
\end{con}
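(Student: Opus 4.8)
The plan is to run the argument behind Theorems~\ref{thm:mainj2}--\ref{thm:mainjp} uniformly over all primes $\ell$, supplying the one ingredient those proofs manage to sidestep: the modularity of the quotient being manipulated. First I would check that $C_{k,j}(\mathbf{z},\tau)$ is a bona fide crank generating function for $p_{k,j}(n)$ by putting $\zeta=1$: each pair $(1+\zeta^{\pm a_i}q^n)$ becomes $(1+q^n)^2$ and each $(1-\zeta^{\pm b_i}q^n)$ becomes $(1-q^n)^2$, so the leading $(1-q^n)(1+q^n)$ upstairs cancels one $(1-q^n)$ downstairs and $\prod_{n\ge1}(1+q^n)^j(1-q^n)^{-k}=\sum_{n\ge0}p_{k,j}(n)q^n$ survives. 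I would also record that expanding the product gives $\sum_{m,n}M_{k,j}(m,n)\zeta^mq^n$ with $M_{k,j}(m,n)\in\Z$; this integrality is what upgrades $\Phi_\ell(\zeta)\mid[q^{\ell n+\delta}]C_{k,j}(\mathbf{z},\tau)$ to $p_{k,j}(\ell n+\delta)\equiv0\pmod\ell$, by evaluating the cofactor at $\zeta=1$ where $\Phi_\ell(1)=\ell$ (Lemma~3.1 of \cite{rolen2021cranks}).

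Next, fix a prime $\ell$. Since $2$ is a unit mod $\ell$, translating the windows of consecutive even integers $a_1,\dots,a_{(j-1)/2}$ and consecutive odd integers $b_1,\dots,b_{(k+1)/2}$ lets us place their residues mod $\ell$ where we need them, and I would do so---together with inserting a suitable number of auxiliary factors by multiplying numerator and denominator by a common product of terms $(1\pm\zeta^cq^n)$---so that two things hold at $\zeta=\zeta_\ell$. First, the multiset of denominator exponents splits into complete sets of residues mod $\ell$, so by the factorization $\prod_{r=0}^{\ell-1}(1-\zeta_\ell^rx)=1-x^\ell$ (and, $\ell$ being odd, $\prod_{r=0}^{\ell-1}(1+\zeta_\ell^rx)=1+x^\ell$) the denominator becomes a power series in $q^\ell$. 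Second, the numerator exponents split the same way apart from a single surviving triple $(1-q^n)(1-\zeta^{\pm c}q^n)=\Tilde{\theta}(cz,\tau)$ or $(1-q^n)(1+\zeta^{\pm c}q^n)=\Tilde{\theta}(cz+\tfrac12,\tau)$, which is supported only on triangular powers of $q$ \cite{NIST:DLMF}. This is exactly the hand bookkeeping performed in the three theorems for $j\in\{2,3,\ell\}$; carrying it out in general forces compatibility conditions among $j$, $k$, and $\ell$.

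With the generating function now written, at $\zeta=\zeta_\ell$, as a power series in $q^\ell$ times $\Tilde{\theta}(cz+\varepsilon,\tau)$ divided by a power series in $q^\ell$, Lemma~\ref{lem:congprod} lets us discard both $q^\ell$-power-series factors, reducing the claim to $[q^{\ell n+\delta}]\Tilde{\theta}(cz+\varepsilon,\tau)=0$. Because those exponents are triangular, Lemma~\ref{lem:triangle} delivers this whenever $8\delta+1$ is a quadratic non-residue mod $\ell$, which are the congruences the conjecture predicts.

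The main obstacle is entirely in the second step. For arbitrary odd $k,j$ and arbitrary $\ell$ there is no reason the available factors, together with whatever auxiliary ones are inserted, can be grouped into complete residue systems plus exactly one triangular-exponent theta; generically a genuine weight-one modular factor is left over, and one must then argue intrinsically that its $q^{\ell n+\delta}$ coefficients still vanish. This is where the classification of weight-one theta blocks of Gritsenko--Skoruppa--Zagier \cite{Gritsenko2019ThetaB} would have to enter, as in \cite{rolen2021cranks}: one would show that $C_{k,j}(\mathbf{z},\tau)$ times an explicit power of $\eta$ is such a theta block, read off the linear forms $c_iz$ occurring, and control each attached theta series. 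Making precise exactly which $\delta$ survive---hence what ``most of the Ramanujan-like congruences'' should mean---is itself part of the problem rather than a consequence of it.
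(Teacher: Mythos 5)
This statement is a conjecture: the paper offers no proof of it, only the remark that the theta-block machinery of Gritsenko--Skoruppa--Zagier, as deployed in \cite{rolen2021cranks}, is the expected route. So there is nothing in the paper to compare your argument against line by line; the question is only whether your proposal actually closes the gap the author left open. It does not, and you say so yourself. Your first and third steps are fine: the $\zeta=1$ specialization correctly recovers $\prod(1+q^n)^j(1-q^n)^{-k}$, the integrality-plus-$\Phi_\ell(1)=\ell$ mechanism is the right way to pass from cyclotomic divisibility to the congruence, and the reduction via Lemma~\ref{lem:congprod} and Lemma~\ref{lem:triangle} is exactly the engine of Theorems~\ref{thm:mainj2}--\ref{thm:mainjp}. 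But the entire content of the conjecture lives in your second step, and there you only describe what would need to happen rather than establish that it does.

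Concretely: for the denominator to collapse to a power series in $q^\ell$ at $\zeta=\zeta_\ell$, the $k+1$ exponents $\pm b_1,\dots,\pm b_{(k+1)/2}$ (together with whatever auxiliary factors you insert, which must be cancelled upstairs and hence reappear as constraints on the numerator) must partition into complete residue systems mod $\ell$; this forces $k$ to lie in specific residue classes mod $\ell$, which is precisely why the paper's theorems carry hypotheses like $k=\ell m+(\ell-1)$ or $k=\ell m+(\ell-3)$ and specific values of $j$. For arbitrary odd $k,j$ and a given $\ell$ no such grouping exists, the leftover factor is not a single $\Tilde{\theta}(cz+\varepsilon,\tau)$ with triangular support, and the elementary argument terminates. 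Your fallback --- ``one would show that $C_{k,j}$ times a power of $\eta$ is a weight-one theta block and control each attached theta series'' --- is the author's own unexecuted plan, not a proof step; nothing in your proposal identifies which theta block arises, verifies it appears in the classification, or extracts the vanishing of the relevant coefficients. Likewise, ``most of the Ramanujan-like congruences'' is never given a precise meaning, so even the statement to be proved remains underdetermined. The proposal is a reasonable roadmap, consistent with the paper's stated intentions, but the conjecture remains open after it.
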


\begin{rem}
    The choice of the $a_i$ and $b_i$ are not unique in the fact that other choices may also result in a valid crank generating function. For even $k$, one only needs to remove the factor of $(1-q^n)$ in the numerator. For even $j$, one should remove the factor of $(1+q^n)$ in the numerator, then switch $a_{\frac{j-1}{2}}$ to $a_{\frac{j+1}{2}}$
\end{rem}

In addition, the author is also interested in answers to the following questions.

\begin{enumerate}
    \item Assuming the above defines a crank for $p_{k,j}$, what are the ``singular congruences" (defined by Rolen, Tripp, and Wagner in \cite{rolen2021cranks}) of this crank?
    \item Is there a nice interpretation of the generating function which allows us to know how to explicitly calculate a crank? (See Section 12.4 of \cite{HIRSCHHORN_2018} for more details.)
    \item Can/How can theta blocks be used to prove the above crank generating function, as well as others?
\end{enumerate}

\section{Acknowledgements}
The author would like to thank Marie Jameson for her continued guidance and advice throughout the writing process. The author also thanks Dalen Dockery for support in editing and useful insights.

\bibliographystyle{alpha}
\bibliography{refs}

\end{document}